\def\r{\mathbb R}
\def\e{\mathbb E}
\newtheorem{theorem}{Theorem}[section]
\newtheorem{corollary}[theorem]{Corollary}
\begin{document}
\title{Ruled translating solitons in Minkowski $3$-space}
\author{Muhittin Evren Aydin}
\email{meaydin@firat.edu.tr}
\address{Department of Mathematics\\
Faculty of Science, Firat University\\
Elazig, 23200, Turkey}
\author{Rafael L\'opez}
\email{rcamino@ugr.es}
\address{Departamento de Geometr\'{\i}a y Topolog\'{\i}a\\
Universidad de Granada\\
18071 Granada, Spain}
\subjclass{53A10, 53C42 }
\keywords{ruled surface, translating soliton, Minkowski space, grim reaper}
\date{}

\begin{abstract}
We characterize all ruled translating solitons in Minkowski $3$-space. In contrast to the Euclidean space, we find ruled translating solitons that are not cylindrical. These surfaces appear when the vector field that defines the rulings, viewed as a curve, is a lightlike straight line. We also classify all cylindrical translating solitons, obtaining surfaces that can be considered as analogous to the grim reapers of Euclidean space, but also other surfaces which have no a counterpart in the Euclidean space.
\end{abstract}

\maketitle



\section{Introduction}

\label{1} 

Let $\mathbb{E}_{1}^{3}$ be the Minkowski $3$-space, that is, the affine
space $\mathbb{R}^{3}$ with canonical coordinates $(x,y,z)$ and endowed with
the indefinite metric $\langle ,\rangle =dx^{2}+dy^{2}-dz^{2}$. As usual, we
use the terminology spacelike, timelike and lightlike of the distinct types of vectors in $\mathbb{E}_{1}^{3}$ according its causality. Likewise as in Euclidean space  (\cite{hsi1,il}), there is a theory of the mean curvature flow in Minkowski space that can be dated back to works by Ecker, Huisken and Gerhardt (\cite{ec1,ec2,ec3,ge}). In general, the submanifolds to be considered are spacelike and although there are similarities with the Euclidean case,  there are also differences. For example, if the value of the codimension of the submanifold is important in the Euclidean space, in Minkowski space the spacelike condition of the submanifold is preserved by the mean curvature flow regardless of its codimension (\cite{ll}). 

In the theory of mean curvature flow, translating solitons play a remarkable role. A \textit{translating soliton} in $\mathbb{E}_{1}^{3}$ with respect to a vector $\vec{v}$, called the {\it velocity} of the flow,   is a non-degenerate surface whose mean curvature $H$
satisfies 
\begin{equation}
2H(p)=\langle N(p),\vec{v}\rangle ,  \label{eq1}
\end{equation}
for all $p\in M$, where $N$ is the unit normal vector field on $M$. A translating soliton  is a solution of the flow when $M$ evolves purely by translations
along the direction $\vec{v}$. In particular, $M+t\vec{v}$, 
$t\in \mathbb{R}$, satisfies that fixed $t$, the normal component of the
velocity vector $\vec{v}$ at each point is equal to the mean curvature at
that point.

In order to find  examples of translating solitons in Minkowski space, we will assume that the translating soliton  is  a {\it ruled surface}. A ruled surface in $\r^3$ is swept out by a straight line  moving along a fix curve $\gamma=\gamma(s)$ called the base of the surface. So, in this paper we investigate in the Lorentzian ambient space the following

\begin{quote}
\textbf{Problem.} Classify all ruled translating solitons in Minkowski $3$-space.
\end{quote}

 Let us notice that the definition of ruled surface is affine and not metric. A ruled surface admits a parametrization $X(s,t)=\gamma(s)+t w(s)$ where $\gamma(s)$ is a regular
curve and $w(s)$ is a nowhere vanishing vector field along $\gamma$. It is when imposing the translating soliton equation \eqref{eq1} that we use the differential-geometric concepts of the Minkowski space $\e_1^3$ or the Euclidean space $\e^3$. 

First we recall what happens with the ruled translating solitons in Euclidean space $\e^3$. A result of Hieu and Hoang proves that a ruled
translating soliton  is a plane or the rulings must be
parallel, that is, a cylindrical surface (\cite{Hieu}). Cylindrical translating solitons in $\e^3$ are called \textit{grim reapers} and can be
explicitly described. Indeed, after a dilation and a rigid motion of $\mathbb{E}^3$, we can
suppose that $\vec{v}=(0,0,1)$. Consider a cylindrical translating surface $X(s,t)=\gamma(s)+tw$ with respect to $\vec{v}$ whose rulings are all parallel to the vector $w\not=0$ and  $\gamma=\gamma(s)$ is a  curve contained
in an orthogonal plane to  $w$. After a rotation about the $z$-axis, let $w=\cos\theta e_1+\sin\theta e_3$, where $\{e_1,e_2,e_3\}$ is the standard
basis of $\mathbb{R}^3$ and  $e_3=\vec{v}$. If $\cos\theta=0$, it is immediate that the surface is a plane parallel to $\vec{v}$. If $\cos\theta\not=0$, the curve $\gamma$   is included in the plane spanned by $\{e_2,e\}$, where $e=-\sin\theta
e_1+\cos\theta e_3$. If $\gamma$ writes as $\gamma(s)=se_2+u(s)e$, then  the function $u$ satisfies the ODE $u''=\cos\theta(1+u^{\prime 2})$. The solution of this equation, the so-called grim reaper, is
$$u(s)=-\frac{1}{ \cos\theta }\log(\cos(a+ s \cos\theta))+b,\quad a,b\in\r.$$
 In case that the rulings are orthogonal to the velocity $\vec{v}$, then the grim reaper is $u(s)=-\log(\cos (s))$ up to suitable constant and translation of $s$.

This paper is motivated by the fact that, in principle, there are more ruled translating solitons in Minkowski space than in Euclidean space. First because there are two types of non-degenerate surfaces, namely, spacelike surfaces and timelike surfaces. The notion of the mean curvature coincide in both cases, but for timelike surfaces the Weingarten map is not necessarily diagonalizable, so it is possible that do not exist principal curvatures. The second difference with the Euclidean case is   the causal character of the  vector $\vec{v}$ in \eqref{eq1}. If in the Euclidean context, the velocity $\vec{v}$ can be prescribed after a rigid motion, in $\e_1^3$ the vector $\vec{v}$ can be also prefixed but now we must distinguish three cases depending if  $\vec{v}$ is spacelike, timelike or lightlike. Definitively, the problem of classification of ruled translating solitons in $\e_1^3$ is richer than in $\e^3$.

As a consequence of our study,  we highlight two important results which are not to be derived in Euclidean space. The first is about cylindrical translating solitons:
\begin{quote} {\it When the velocity $\vec{v}$ is lightlike, any cylindrical surface whose rulings are parallel to $\vec{v}$ is a translating soliton.}
\end{quote}
The second result asserts that surprisingly, as we shall demonstrate in this article,   
\begin{quote} {\it There are ruled translating solitons in $\e_1^3$  that are not cylindrical. }
\end{quote}

The structure of the paper is the following. After some preliminaries on local classical differential geometry of surfaces in $\e_1^3$ (Section \ref{sec2}), we classify in Section \ref{sec3} the ruled translating solitons when the rulings are parallel to a fix direction (cylindrical surfaces). We will obtain in Theorem \ref{t1} a complete description of such surfaces. In the case that the rulings are not lightlike, these surfaces can be considered as the analogous of the grim reapers of  in Euclidean space (Corollary \ref{c1}), but we will obtain more. Furthermore, in the case that the rulings are lightlike,    the  rulings must be parallel to $\vec{v}$ but with the freedom that the base curve of the surface is arbitrary.   Recently the authors have proved that the  cylindrical translating solitons in Minkowski space are the only translating solitons obtained by the technique of separation of variables (\cite{al}).

In   Section \ref{sec4}, we address the question on the existence of non-cylindrical ruled surfaces that are translating solitons. Recall that in Euclidean space, the only possibility is that the surface is a plane (\cite{Hieu}). As a conclusion of Theorems \ref{t2}, \ref{t3} and \ref{t4}, we will prove that there are  different examples than planes. These surfaces only appear when the direction of the rulings, $s\mapsto w(s)$, viewed as a curve of $\e_1^3$, is a lightlike straight line.  We will show the explicit parametrizations of these surfaces at the end of the section. As an illustrative example of this type of surfaces, let $\vec{v}=(1,0,0)$. The  surfaces 
$$X(s,t)=\left(\log(s),\frac{1}{2s},-\frac{1}{2s}\right)+t(1,s,s),\quad s>0, t>\frac12$$
and 
$$Y(s,t)=\left(-\frac12\log(1+s^2),\arctan(s)+s,s\right)+t (1,s,s),\quad s\in\r, t>-\frac32.$$
are translating solitons with respect to $\vec{v}$ and both are not cylindrical.

\section{Preliminaries}\label{sec2}

We recall the notion of the mean curvature of a non-degenerate surface  in Minkowski space as well as a local expression of the Equation \eqref{eq1} for parametric surfaces. Much of the local surface theory in Minkowski space  is similar to the Euclidean space. Here we refer to \cite{lo,we} for details. The Lorentzian cross product of two vectors $\vec{a},\vec{b}\in\e_1^3$ is defined as the unique vector $\vec{a}\times \vec{b}$ such that $\langle \vec{a}\times \vec{b},\vec{c}\rangle=(\vec{a},\vec{b},\vec{c})$ for every $\vec{c}\in\e_1^3$, where $(\vec{a},\vec{b},\vec{c})$ denotes the determinant of the $3\times 3$ matrix formed by the vectors $\vec{a}$, $\vec{b}$ and $\vec{c}$.  According to the induced metric, a non-degenerate surface $M$ of $\e_1^3$ is spacelike (resp. timelike) if the metric is Riemannian (resp. Lorentzian). In such a case, one can define a unit normal vector field $N$ on $M$ which is timelike (resp. spacelike) if $M$ is spacelike (resp. timelike). Let $\epsilon=\langle N,N\rangle$ and denote $\nabla^0$ and $\nabla$ the   Levi-Civita connection of $\e_1^3$ and of $M$, respectively. For two tangent vector fields $U$ and $V$ on $M$, the Gauss formula is $\nabla_U^0 V=\nabla_U V+\sigma(U,V)$, where $\sigma$ is the second fundamental form of the immersion. Since $\sigma(U,V)$ is proportional to $N$, we have $\sigma(U,V)=\epsilon\langle\sigma(U,V),N\rangle N$. The mean curvature vector $\vec{H}$ is defined by $\vec{H}=\frac12\mbox{trace}(\sigma)$ and the (scalar) mean curvature $H$  by the relation $\vec{H}=HN$. Consequently,
$$H=\epsilon \langle \vec{H},N\rangle.$$
The Weingarten map $A_p$,  $p\in M$, is a self-adjoint endomorphism in the tangent plane $T_pM$ defined by the relation $\langle A_p U_p,V_p\rangle=\langle\sigma_p(U_p,V_p),N(p)\rangle$. Hence we obtain the formula $H=\dfrac{\epsilon}{2}\mbox{trace}(A)$ and similarly for the Gauss curvature $K=\epsilon\, \mbox{det}(A)$. In case that $M$ is spacelike  ($\epsilon=-1$), the Weingarten map is diagonalizable defining the principal curvatures as the eigenvalues of $A$. 

In order to compute the mean curvature $H$ in  \eqref{eq1}, we now obtain the expression of $H$ in local coordinates. Let $X=X(s,t)$ be a local parametrization of $M$. Choose $N$ as
$$N=\frac{X_s\times X_t}{|X_s\times X_t|}.$$
Then the mean curvature $H$ is  
$$
2H=-\frac{E\,(X_s,X_t,X_{tt})-2F\,(X_s,X_t,X_{st})+G\,(X_s,X_t,X_{ss})}{|EG-F^2|^{3/2}}.$$
Here $E$, $F$ and $G$ are the coefficients of the first fundamental form, with $-\epsilon(EG-F^2)=|X_s\times X_t|^2$. If $H_1=H_1(s,t)$ denotes the numerator of the right hand side in the above expression, then the translating soliton equation \eqref{eq1} is
\begin{equation}\label{eq2}
H_1=\epsilon(EG-F^2)\,(X_s,X_t,\vec{v}).
\end{equation}

To conclude this section, let point out the behaviour of the translating soliton equation \eqref{eq1} by dilations of the space. Let $M$ be a translating soliton with respect to $\vec{v}$. If $\lambda>0$ is a positive real number, then the dilation $\lambda M$ of $M$ has mean curvature $H/\lambda$. Since the Gauss map coincide at corresponding points of $M$ and $\lambda M$, then $\lambda M$ is a translating soliton with respect to the velocity $ \vec{v}/\lambda$. Or in other words, if we replace $\vec{v}$ by a multiple  $\lambda \vec{v}$, then a translating soliton with respect to $\vec{v}$ is a dilation of a translating soliton with respect to $\lambda \vec{v}$. On the other hand, a dilation of a ruled surface is also a ruled surface with rulings parallel to that of the initial surface. Thus, in our study of ruled translating solitons, the velocity $\vec{v}$ can be multiplied by a constant being the dilation of the ruled surface another ruled translating soliton with parallel rulings at corresponding points.

\section{Cylindrical translating solitons}\label{sec3}

Consider a cylindrical surface in $\e_1^3$ and denote by  $w\not=0$   the direction of its rulings, with $|w|=1$ in case that $w$ is not lightlike. The parametrization of the surface is $X(s,t)=\gamma(s)+t w$, $t\in\r$, $s\in I\subset\r$, where $\gamma$ is a curve contained in an orthogonal plane to $w$. After a rigid motion of $\e_1^3$ we can assume that $w$ is $(1,0,0)$, $(0,0,1)$ or $(1,0,1)$. The first result is the classification of the cylindrical translating solitons.

\begin{theorem}\label{t1} Let $\vec{v}=(v_1,v_2,v_3)$ and let $M$ be a cylindrical surface. If $M$ is a translating soliton  with respect to $\vec{v}$, then $M$ is a plane parallel to $\vec{v}$ or $M$ must be one of the next three cases.
\begin{enumerate}
\item Spacelike rulings. Let $w=\left( 1,0,0\right) $. Then $M$ parametrizes as $X\left(
s,t\right) =\left( 0,s,u(s)\right) +tw$ where  
\begin{equation}\label{eq31}
-u''=\left\{ 
\begin{array}{ll}
-\left( 1-u^{\prime 2}\right) \left( v_{2}u'-v_{3}\right),& \mbox{if } 1-u^{\prime 2}>0 \\ 
\left( 1-u^{\prime 2}\right) \left( v_{2}u'-v_{3}\right),&  \mbox{if } 1-u^{\prime 2}<0.
\end{array}
\right.
\end{equation}
\item Timelike rulings. Let $w=\left( 0,0,1\right) $. Then  $M$ parametrizes as $X\left(
s,t\right) =\left( s,u(s) ,0\right) +tw$ where  
\begin{equation}\label{eq32}
u''=\left( 1+u^{\prime 2}\right) \left( v_2-v_{1}u^{\prime
}\right).
\end{equation} 
\item Lightlike rulings. Let $w=\left( 1,0,1\right) $. Then either  $M$ is a plane parallel to $\vec{v}$ or  $\vec{v}$ is parallel to the rulings and the base curve is arbitrary.
\end{enumerate}

\end{theorem}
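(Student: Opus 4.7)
The plan is to substitute the cylindrical parametrization $X(s,t)=\gamma(s)+tw$ into the local expression \eqref{eq2} and then analyze the resulting equation according to the causal character of $w$. A quick observation does most of the work: since $X_t=w$ is constant, one has $X_{st}=X_{tt}=0$, so the numerator $H_1$ collapses to $G\,(X_s,X_t,X_{ss})$ with $G=\langle w,w\rangle$, and \eqref{eq2} becomes the single relation
\begin{equation*}
G\,(X_s,X_t,X_{ss}) \;=\; \epsilon(EG-F^2)\,(X_s,X_t,\vec{v}).
\end{equation*}
After a rigid motion that reduces $w$ to one of $(1,0,0)$, $(0,0,1)$ or $(1,0,1)$, I would specialize this identity to each of the three cases.

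In cases (1) and (2) the ruling direction is non-degenerate, so $\gamma$ can be placed in the orthogonal plane to $w$ and written as $(0,s,u(s))$ or $(s,u(s),0)$ respectively. Direct calculation gives $F=0$ and $G=\pm 1$, with $EG-F^2$ equal to $1-u'^{2}$ or $-(1+u'^{2})$; the two required triple products are elementary $3\times 3$ determinants. Substitution into the master equation yields \eqref{eq31} and \eqref{eq32}. The only real subtlety is in case (1), where the sign of $\epsilon$ depends on the sign of $1-u'^{2}$, and this is precisely what produces the two branches of \eqref{eq31}.

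The decisive case is (3), where $w=(1,0,1)$ is lightlike and consequently $G=\langle w,w\rangle =0$. The left-hand side of the master equation vanishes identically, so the non-degeneracy condition $EG-F^{2}\neq 0$ forces the pointwise algebraic constraint $(X_s,X_t,\vec{v})=0$. Because the plane $w^{\perp }$ is itself degenerate, I would avoid placing $\gamma$ there and instead use the translation freedom along $w$ to write $\gamma(s)=(0,\gamma_2(s),\gamma_3(s))$: if $\gamma_2'\equiv 0$ the surface is already a plane of the form $y=\mathrm{const}$, while otherwise a reparametrization gives $\gamma(s)=(0,s,h(s))$. A short determinant then reduces the constraint to $(v_1-v_3)+v_2\,h'(s)\equiv 0$, whose solutions split cleanly: either $v_2\neq 0$ and $h'$ is forced constant, so $M$ is a plane containing $\vec{v}$; or $v_2=0$ and $v_1=v_3$, meaning $\vec{v}=v_1 w$ is parallel to the rulings while $h$ is entirely free.

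The main conceptual obstacle is recognizing that for lightlike rulings the translating soliton condition is no longer an ODE for the profile curve but a pointwise linear constraint, which is precisely why such a large family of solutions appears. The remaining work is routine bookkeeping: verifying that the chosen parametrizations cover every cylindrical surface with rulings in direction $w$ modulo the translation freedom along $w$, and checking non-degeneracy in each branch ($1-u'^{2}\neq 0$ in case (1), automatic positivity of $1+u'^{2}$ in case (2), and $h'\neq 0$ in the non-planar lightlike subcase).
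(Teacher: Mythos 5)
Your proposal is correct and follows essentially the same route as the paper: substitute $X(s,t)=\gamma(s)+tw$ into \eqref{eq2}, note the collapse to $G\,(X_s,X_t,X_{ss})=\epsilon(EG-F^2)(X_s,X_t,\vec v)$, and split on the causal type of $w$, with the lightlike case degenerating to the pointwise linear constraint $(X_s,X_t,\vec v)=0$ exactly as in the paper. The only differences are cosmetic (your transversal plane $x=0$ in case (3) versus the paper's span of $(0,1,0)$ and $(1,0,-1)$, and your deferral of the non-graph subcases of (1) and (2), which the paper handles explicitly as the straight-line bases yielding planes parallel to $\vec v$).
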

\begin{proof}
\begin{enumerate}
\item Case $w=(1,0,0)$. Then the base curve $\gamma$ is contained in  the $yz$-plane. Since we want to consider the situation that $\gamma$ is a graph on the $y$-line, first we study the case that $\gamma$ is a vertical straight line $\gamma(s)=(0,c,s)$, $c\in\r$. Then the Equation \eqref{eq2} is simply $v_2=0$. Thus $\vec{v}=(v_1,0,v_3)$ and $M$ is a plane parallel to $\vec{v}$. In case that $\gamma$ is not a vertical line, we can locally write the base curve as $\gamma(s)=(0,s,u(s))$. Then $M$ is spacelike if $1-u'^2>0$ and  timelike if $1-u'^2<0$. The computation of \eqref{eq2} gives \eqref{eq31}.
\item Case $w=(0,0,1)$. Now the  curve $\gamma$ is contained in  the $xy$-plane. A first case to distinguish is that $\gamma$ is a line of type $\gamma(s)=(c,s,0)$, $c\in\r$. Now \eqref{eq2} is  $v_1=0$. Thus $\vec{v}=(0,v_2,v_3)$ and $M$ is a plane parallel to $\vec{v}$. In case that $\gamma$ is not a horizontal line, we can locally write $\gamma$ as $\gamma(s)=(s,u(s),0)$. Then $M$ is timelike because $EG-F^2=-1-u'^2$ and  Equation   \eqref{eq2} is \eqref{eq32}

\item Case $w=(1,0,1)$. Then $H=0$ and the translating soliton equation is simply $\langle N,\vec{v}\rangle=0$. The base curve is contained in the plane spanned by $(0,1,0)$ and $(1,0,-1)$. A first case is when $\gamma$ is the straight line $\gamma(s)=(s,c,-s)$, $c\in\r$. Then   Equation \eqref{eq2} is $v_2=0$  and $M$ is a plane parallel to $\vec{v}$. Otherwise, $\gamma$ writes as $\gamma(s)=(u(s),s,-u(s))$. The non-degeneracy condition of the surface is equivalent to   $u'\neq 0$ and \eqref{eq2}
is
\begin{equation*}
2v_{2}u'-v_1+v_{3}=0.
\end{equation*}
If $v_2=0$, then $v_1=v_3$ and $u(s)$ is an arbitrary function. In this case, $M$ is a translating soliton with respect to $\vec{v}=v_1(1,0,1)$, being $\vec{v}$ parallel to the rulings. If $v_2\not=0$, then $u(s)=(v_1-v_3)/(2v_2)s+a$, $a\in\r$. Now $\gamma$ is a straight line and $M$ is a plane parallel to $\vec{v}$ again. 

\end{enumerate}
\end{proof}

As a consequence of this theorem,   the family of cylindrical translating solitons in $\e_1^3$ where the rulings are lightlike has not a counterpart in the Euclidean space being now arbitrary  the base curve. In particular,    there are many (non-planar) translating solitons because the only condition is that the lightlike rulings must be parallel to the velocity $\vec{v}$.   It is important to point out that a ruled surface in $\e_1^3$ whose rulings are lightlike are   known  as \textit{null scrolls} in the literature and they have
an important role in Einstein's theory of relativity and physics of
gravitation in description of lightlike particles (\cite{bf}). Moreover, from the geometric viewpoint, these surfaces satisfy the equation $H^2=K$ and in the present case where the ruled surface is cylindrical, we have $H=K=0$.

In the final part of this section we will focus in the case that the rulings are not lightlike, obtaining  explicit parametrizations of the base curve.  Here we will have in mind that in Euclidean space, all grim reapers are  produced translating, scaling and rotating the standard grim reaper $u(s)=-\log(\cos(s))$.

\begin{enumerate}
\item  Case  $w=\left( 1,0,0\right) $. We consider two particular choices for the velocity $\vec{v}$.
\begin{enumerate}
\item Case  $\vec{v}=\left( v_{1},0,1\right) $. The solution of \eqref{eq31} depends if $u'^2<1$ or if $u'^2>1$. The integration yields 
\begin{equation}\label{gr1}
u(s) =\left\{ 
\begin{array}{ll}
-\log \cosh \left( s+a\right) +b& \mbox{or} \\ 
\log \sinh \left( s+a\right) +b,&
\end{array} 
\right.
\end{equation} 
respectively, where $a,b\in\r$. Both curves  appeared in \cite{I}.  Let us observe that the velocity $\vec{v}$ and the rulings are not necessarily orthogonal. For example, both are orthogonal if  $\vec{v}=(0,0,1)$. Let us observe that the same surfaces are translating solitons for lightlike vectors (if $v_1=\pm 1$) and timelike vectors (if $v_1^2<1$).
\item Case  $\vec{v}=(v_1,1,0)$. Then the solution of \eqref{eq31} is 
\begin{equation}\label{gr2}
u(s) =\left\{ 
\begin{array}{ll}
\pm \log (e^s+\sqrt{e^{2s}+a})+b,& a,b\in\r, a\not=0, \mbox{or} \\ 
\pm \mbox{ arctanh}(\sqrt{1-a e^{2s}})+b,& a,b\in\r, a>0. 
\end{array} 
\right.
\end{equation} 
\end{enumerate}

Assume now that $\vec{v}$ is not parallel to $w$ neither $(0,v_2,v_3)$ is lightlike. Then $v_2^2-v_3^2\not=0$.   Consider the rotations   $R_\varphi$ that pointwise fix the direction $w$ of the  rulings, which can be expressed by
$$R_\varphi=\left(\begin{array}{lll}1&0&0\\ 0&\cosh\varphi&\sinh\varphi  \\ 0&\sinh\varphi&\cosh\varphi\end{array}\right),$$
where $\varphi\in\r$. 
\begin{enumerate}
\item   If $v_2^2<v_3^2$, by choosing $\varphi$ such that $\tanh\varphi=-v_2/v_3$, then $R_\varphi(\vec{v})=(v_1,0,\tilde{v_3})$, $\tilde{v_3}\not=0$. After a dilation, the velocity $R_\varphi(\vec{v})$ can be assumed to be $(v_1,0,1)$. Thus the surface is a rotation and  a dilation of the surfaces given in \eqref{gr1}.
\item   If $v_2^2>v_3^2$, then there is $\varphi$ such that $\tanh\varphi=-v_3/v_2$. Then after the rotation $R_\varphi$ and a dilation of the space, we can assume that $\vec{v}=(v_1,1,0)$. Now the surfaces are rotations and dilations of the translating solitons given in \eqref{gr2}.
\end{enumerate}

\item Case  $w=\left( 0,0,1\right) $.  For the particular vector    $\vec{v}=\left( 0,1,v_{3}\right) $ in \eqref{eq32},  the equation to solve is $u''=1+u'^2$ and its solution is  \begin{equation}\label{gr3}
u\left(
s\right) =-\log (\cos \left( s+a\right)) +b, \quad a,b\in\r,
\end{equation}
 which coincides with the Euclidean grim reaper. Suppose now that $\vec{v}$ is not parallel to $w$ that is, $v_1\not=0$ or $v_2\not=0$ and consider the rotations 
$$R_\theta=\left(\begin{array}{lll}\cos\theta&-\sin\theta&0\\  \sin\theta&\cos\theta &0\\ 0&0&1\end{array}\right)$$
about the $z$-axis, $\theta\in\r$. After a dilation of $\e_1^3$, we can assume that $v_1^2+v_2^2=1$. Then there is  $\theta\in\r$ such that $R_\theta\vec{v}=(0,1,v_3)$. This rotation fixes the direction of the rulings, so the surface is again a ruled translating  surface with rulings parallel to $w$ but now the velocity $\vec{v}$ is $(0,1,v_3)$. By the previous work, we know that in such a case,  the surface is the Euclidean grim reaper \eqref{gr3}.
\end{enumerate}
  
  We summarize the above results as follows.

  \begin{corollary}\label{c1} Let $M$ be a cylindrical surface in $\e_1^3$ whose rulings are not lightlike. Let $\vec{v}$ be a vector that is not parallel to the rulings. If $M$ is a translating soliton with respect to $\vec{v}$, then $M$ is, up to a dilation and a rotation, 
   \begin{enumerate}
 \item  one of the surfaces   of equations \eqref{gr1} or \eqref{gr2} if the rulings are spacelike and the projection $(0,v_2,v_3)$ of $\vec{v}$ onto the $yz$-plane  is not lightlike; or
 \item the Euclidean grim reaper \eqref{gr3} if the rulings are timelike.  
 \end{enumerate}
 \end{corollary}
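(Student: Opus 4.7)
The plan is to deduce the corollary from Theorem \ref{t1} by combining it with the invariance of the translating soliton equation under dilations (noted at the end of Section \ref{sec2}) and under Lorentzian isometries that fix the direction of the rulings pointwise. Theorem \ref{t1} already reduces matters to the two ODEs \eqref{eq31} (for $w=(1,0,0)$) and \eqref{eq32} (for $w=(0,0,1)$), so the task is to exhibit, in each case, a normalization of $\vec{v}$ in which the ODE integrates in closed form.

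First I would treat the case of spacelike rulings $w=(1,0,0)$. The stabilizer of $w$ in the Lorentz group consists of the hyperbolic rotations $R_\varphi$ in the $yz$-plane displayed in the preceding discussion. The hypothesis that the projection $(0,v_2,v_3)$ of $\vec{v}$ is not lightlike means $v_2^{2}-v_3^{2}\neq 0$. When $v_2^{2}<v_3^{2}$, the choice $\tanh\varphi=-v_2/v_3$ together with a dilation brings $\vec{v}$ to $(v_1,0,1)$; when $v_2^{2}>v_3^{2}$, the choice $\tanh\varphi=-v_3/v_2$ together with a dilation brings $\vec{v}$ to $(v_1,1,0)$. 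For these canonical velocities \eqref{eq31} specializes to $u''=\pm(1-u'^{2})$ and to $u''=\pm(1-u'^{2})u'$ respectively; in the first the substitution $p=u'$ gives $p'=\pm(1-p^{2})$, whose quadrature yields \eqref{gr1} according to the sign of $1-u'^{2}$, while in the second the same substitution gives a separable equation leading to \eqref{gr2}.

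Next I would treat the case of timelike rulings $w=(0,0,1)$. The stabilizer of $w$ now consists of the Euclidean rotations $R_\theta$ about the $z$-axis. Since $\vec{v}$ is not parallel to $w$, the horizontal component $(v_1,v_2,0)$ is nonzero and spacelike; a dilation normalizes it to unit length, and a suitable $R_\theta$ then sends $\vec{v}$ to $(0,1,v_3)$. With this canonical velocity, \eqref{eq32} reduces to $u''=1+u'^{2}$, which integrates at once to the Euclidean grim reaper profile \eqref{gr3}.

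The computations themselves are elementary separations of variables; the real content beyond Theorem \ref{t1} is the symmetry reduction that brings $\vec{v}$ into a canonical form, and the main bookkeeping task is to verify that after each normalization the transformed velocity is still not parallel to the rulings (so the hypotheses are preserved) and to keep track of the two branches in \eqref{eq31} arising from the sign of $1-u'^{2}$, which is precisely what produces the two sub-families in \eqref{gr1} and in \eqref{gr2}.
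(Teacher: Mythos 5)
Your proposal is correct and follows essentially the same route as the paper: Theorem \ref{t1} reduces the problem to the ODEs \eqref{eq31} and \eqref{eq32}, and the hyperbolic rotations $R_\varphi$ (stabilizing $w=(1,0,0)$) respectively the Euclidean rotations $R_\theta$ (stabilizing $w=(0,0,1)$), combined with a dilation, normalize $\vec{v}$ to $(v_1,0,1)$ or $(v_1,1,0)$ in the spacelike case and to $(0,1,v_3)$ in the timelike case, after which the equations integrate by quadratures to \eqref{gr1}, \eqref{gr2} and \eqref{gr3}. The specialized equations you record, $u''=\pm(1-u'^2)$, $u''=\pm(1-u'^2)u'$ and $u''=1+u'^2$, agree with the paper's computations.
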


By analogy with the Euclidean case, we will call   \textit{Lorentzian grim reapers} the translating solitons of Corollary \ref{c1}. See Figure \ref{fig1}. 
\begin{figure}[hbtp]
\begin{center}
\includegraphics[width=.25\textwidth]{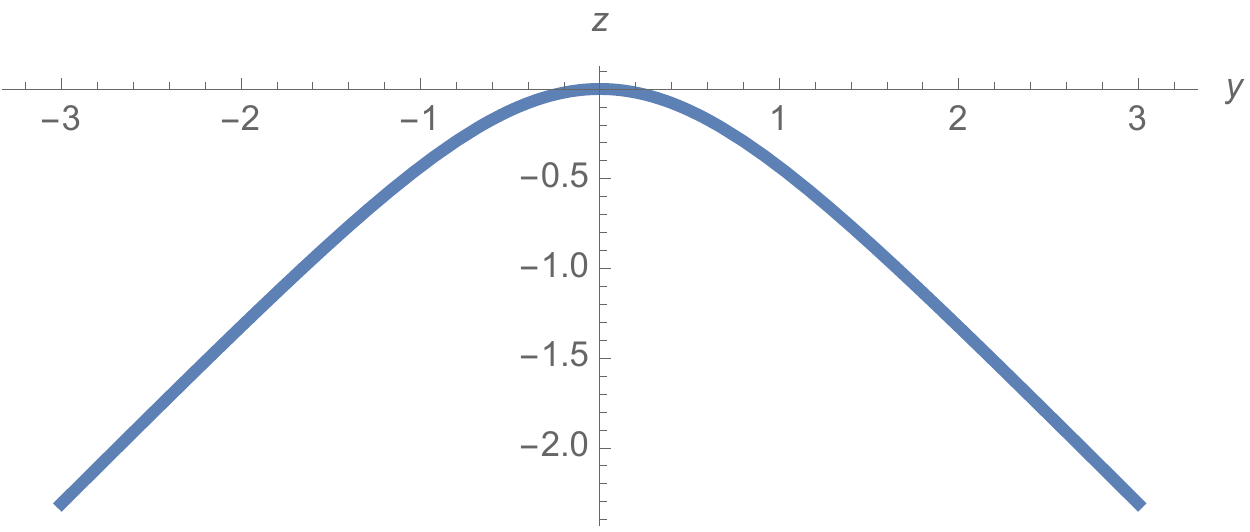}\includegraphics[width=.2\textwidth]{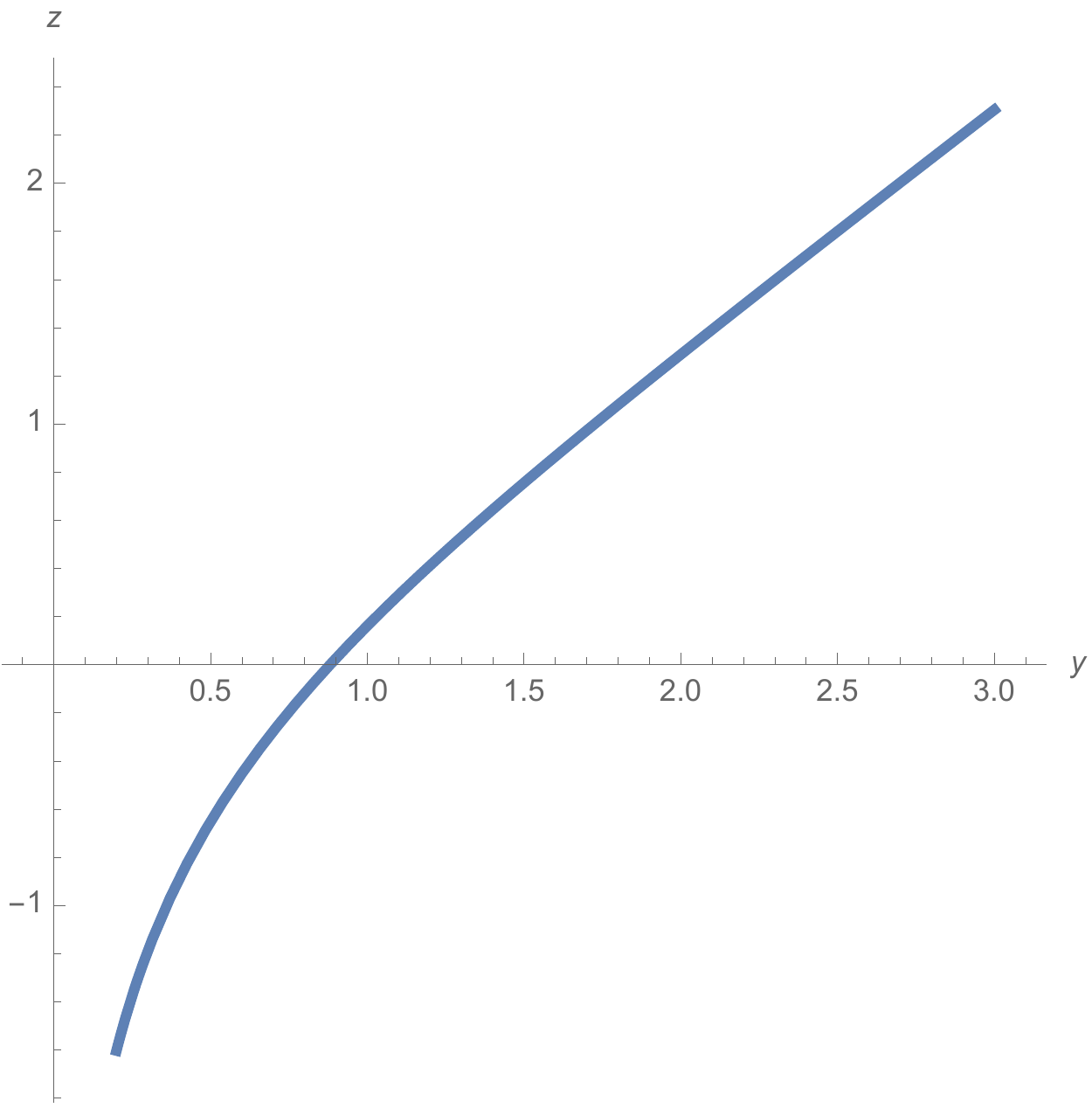}\includegraphics[width=.25\textwidth]{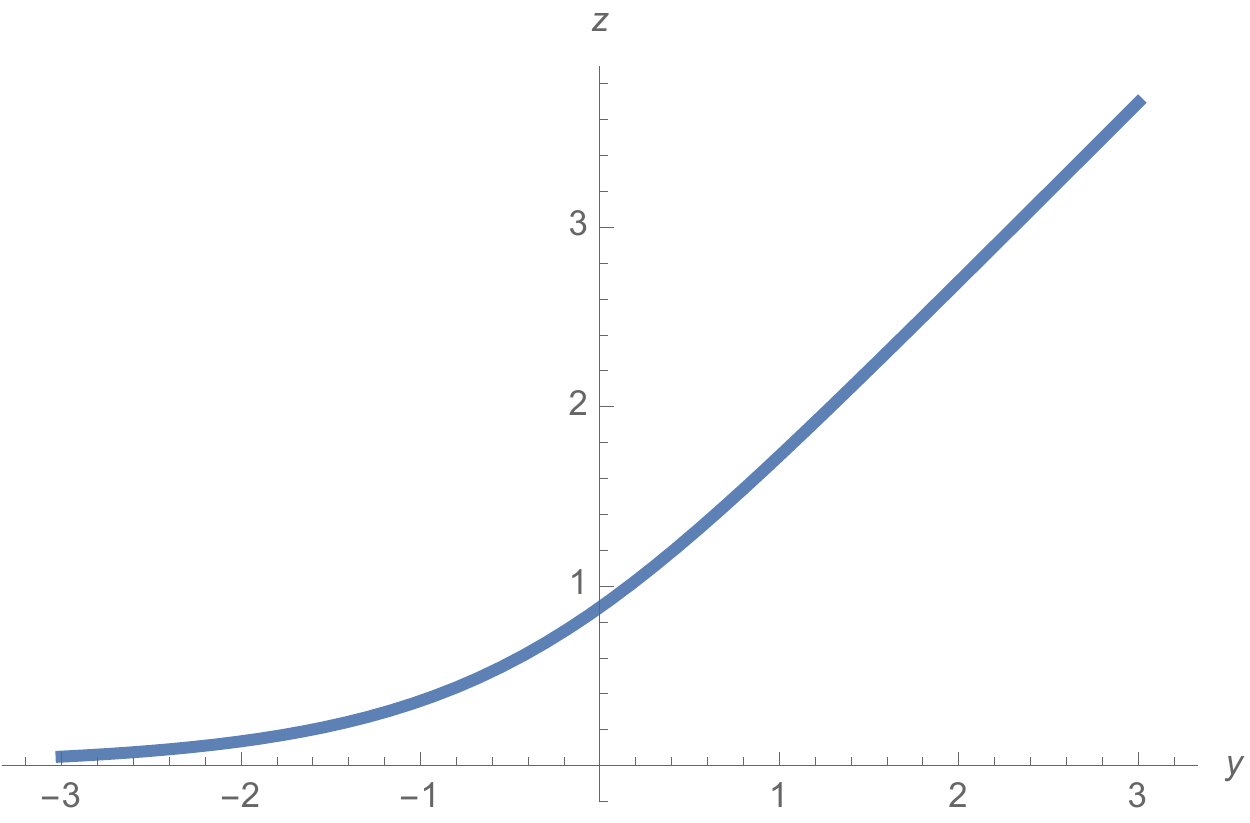}\includegraphics[width=.15\textwidth]{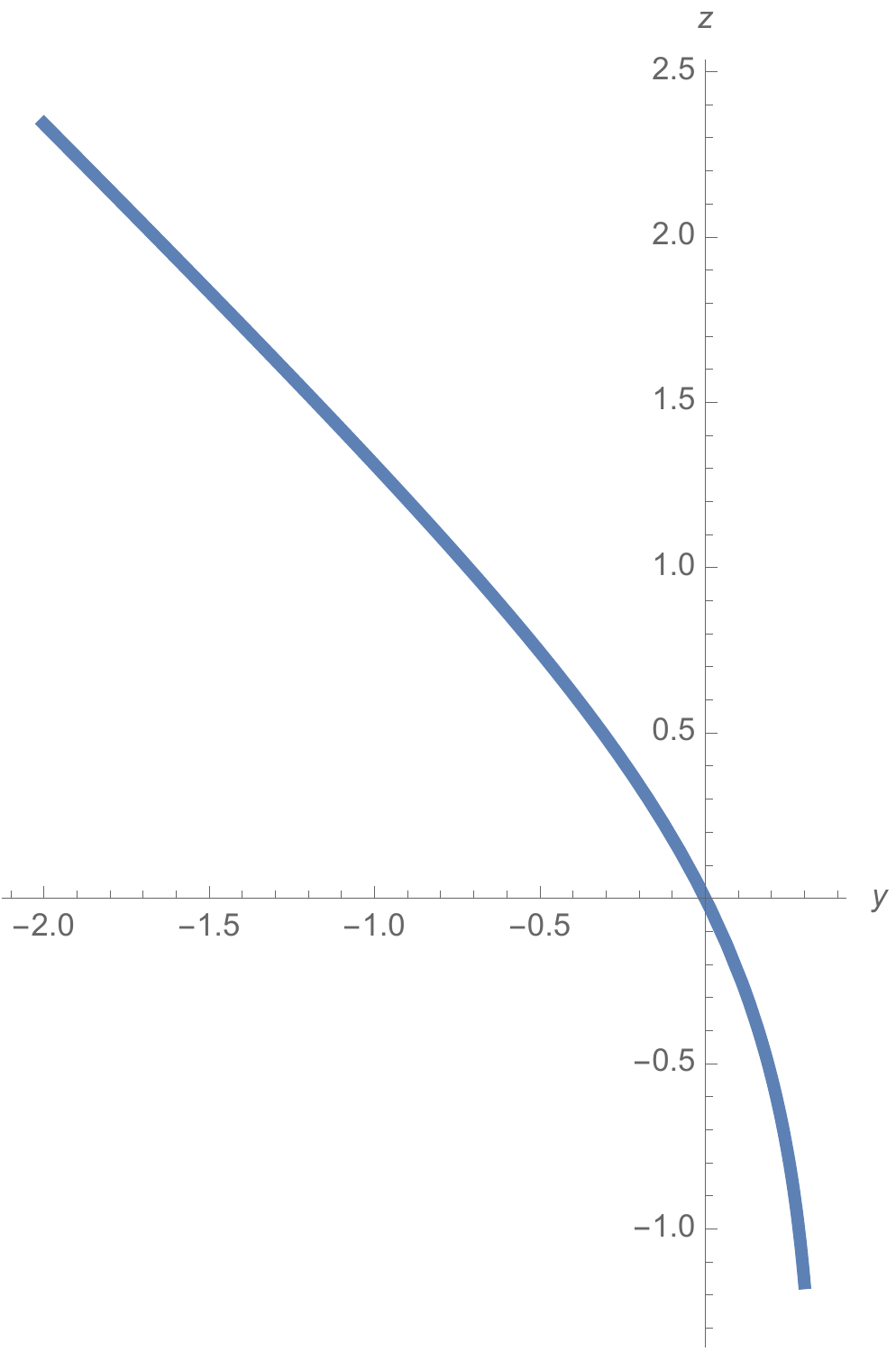}
\end{center}
\caption{Lorentzian grim reapers. From left to right:   $u(s)=-\log \cosh{s}$, $u(s)=\log \sinh{s}$, $u(s)=  \log (e^s+\sqrt{e^{2s}+1})$ and $u(s)= \mbox{ arctanh}(\sqrt{1- e^{2s}})$.}\label{fig1}
\end{figure}

Let us observe that there are cylindrical translating solitons with non-degenerate rulings that do not appear in this corollary. Indeed, consider the rulings given by   $w=(1,0,0)$. Then the case that   $\vec{v}$ satisfies $v_2^2-v_3^2=0$ does not enter in Corollary \ref{c1}.  After a dilation, let  $\vec{v}=(v_1,1,1)$. Then \eqref{eq31} is 
\begin{equation}\label{gr0}
u''=\left\{ 
\begin{array}{ll}
\left( 1-u^{\prime 2}\right)(1-u'),& \mbox{if }1-u^{\prime 2}>0 \\ 
-\left( 1-u^{\prime 2}\right)(1-u'),&\mbox{if }  1-u^{\prime 2}<0. 
\end{array} 
\right.
\end{equation} 
These equations are not integrable by quadratures. See Figure \ref{fig2} for numerical computations of solutions of \eqref{gr0}.

\begin{figure}[hbtp]
\begin{center}
\includegraphics[width=.4\textwidth]{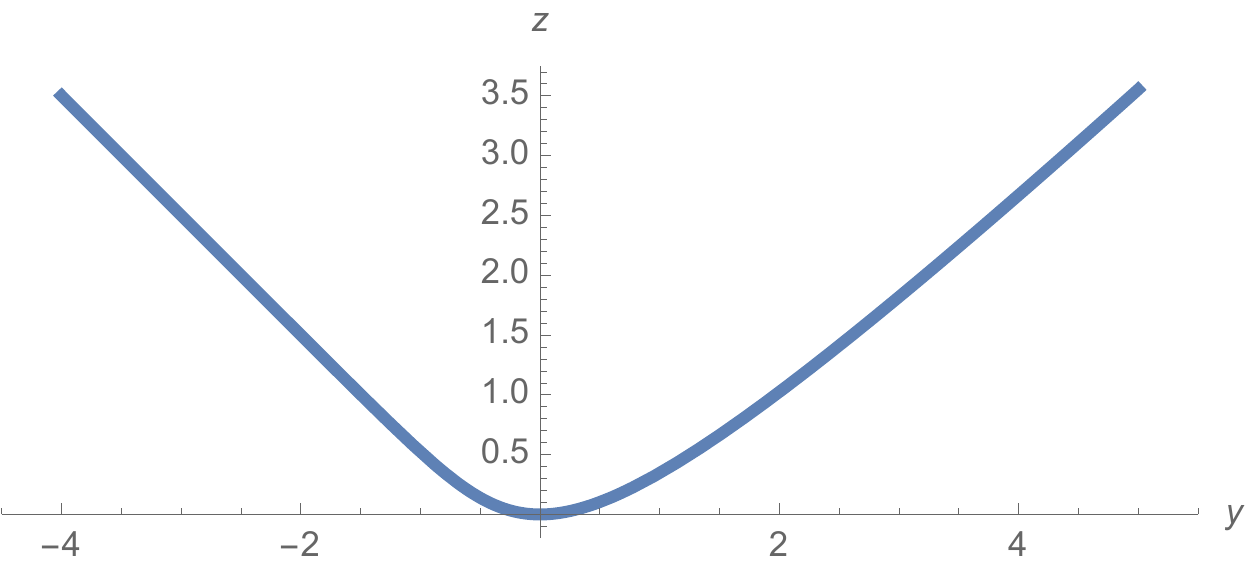}\quad \includegraphics[width=.3\textwidth]{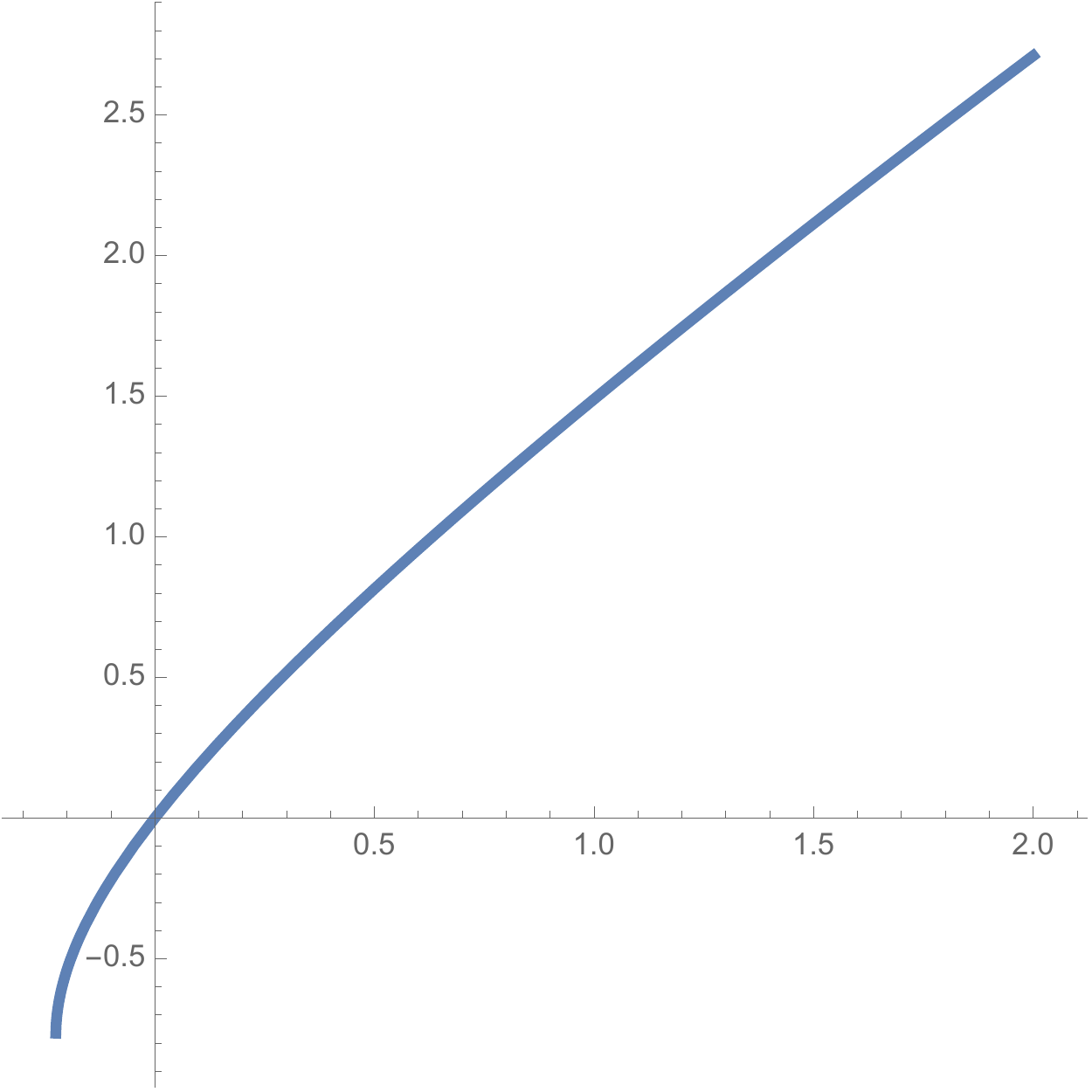}\end{center}
\caption{Numerical solutions of equations \eqref{gr0}. Case $1-u'^2>0$ (left) and $1-u'^2<0$ (right). }\label{fig2}
\end{figure}


\section{Non-cylindrical translating solitons}\label{sec4}

In this section we investigate non-cylindrical ruled  translating solitons.  The parametrization of a ruled  surface is  $X(s,t)=\gamma (s)+tw(s)$ where $\gamma
:I\subset \mathbb{R}\rightarrow \mathbb{E}_{1}^{3}$, $\gamma =\gamma (s),$
is a regular curve and $w(s)$ is a nowhere
vanishing vector field along the curve $\gamma $.  For a description of the parametrizations of ruled surfaces in $\e_1^3$, we refer the reader to the reference \cite{dk}. If the surface is not cylindrical, then we can assume that $w'(s)\not=0$ in the interval $I$. In our investigation  we will separate the cases that rulings are or not lightlike.   First we consider the case that $w(s)$ is a lightlike vector field. 

\begin{theorem}\label{t2} If a ruled surface with lightlike rulings is a translating
soliton with respect to $\vec{v}\in \mathbb{E}_{1}^{3}$, then  it is cylindrical surface and the rulings are parallel to $\vec{v}$.
\end{theorem}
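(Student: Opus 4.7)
The plan is to argue by contradiction: assume $X(s,t)=\gamma(s)+tw(s)$ is a ruled translating soliton with $\langle w,w\rangle\equiv 0$ and $w(s),w'(s)$ linearly independent on some interval (the genuine non-cylindrical situation), and deduce $\vec v=0$, which is excluded.

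First I would compute the first fundamental form. Differentiating $\langle w,w\rangle=0$ gives $\langle w,w'\rangle=0$, hence $G=0$, $F=\langle\gamma',w\rangle$, and $EG-F^{2}=-F^{2}$; non-degeneracy forces $F\neq 0$ and makes the surface timelike with $\epsilon=1$. Since $X_{tt}=0$, $X_{st}=w'$ and $(w',w,w')=0$, the numerator $H_{1}$ in \eqref{eq2} collapses to $2F(\gamma',w,w')$, so \eqref{eq2} becomes
$$2F(\gamma',w,w')=-F^{2}\bigl[(\gamma',w,\vec v)+t(w',w,\vec v)\bigr].$$
The left side is independent of $t$ and the right side is affine in $t$, so matching the coefficient of $t$ and using $F\neq 0$ gives $(w',w,\vec v)=0$; equivalently, $\vec v\in\mathrm{span}(w(s),w'(s))$ for every $s$.

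Writing $\vec v=a(s)w(s)+b(s)w'(s)$ and differentiating in $s$ produces
$$a'w+(a+b')w'+bw''=0.$$
On any interval where $b(s)\neq 0$ this forces $w''\in\mathrm{span}(w,w')$, say $w''=pw+qw'$. Using the Lorentzian cross product,
$$(w\times w')'=w\times w''=q(w\times w'),$$
so $w(s)\times w'(s)$ is always a scalar multiple of a fixed vector and the plane $\Pi(s)=\mathrm{span}(w(s),w'(s))$ is independent of $s$. But $\Pi$ is a lightlike 2-plane (its induced metric has matrix $\mathrm{diag}(0,|w'|^{2})$ in the basis $\{w,w'\}$), and the only null directions in a lightlike 2-plane are scalar multiples of its distinguished null line; hence $w(s)$ has constant direction, contradicting non-cylindricality. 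Therefore $b\equiv 0$, so $\vec v=a(s)w(s)$, and $a'w+aw'=0$ combined with the linear independence of $w$ and $w'$ yields $a\equiv 0$ and $\vec v=0$.

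Since $\vec v\neq 0$, the non-cylindrical hypothesis is untenable, so $M$ must be cylindrical; the remaining claim that the rulings are parallel to $\vec v$ then follows from case 3 of Theorem \ref{t1}. The hard part is converting the pointwise inclusion $\vec v\in\mathrm{span}(w(s),w'(s))$ into rigidity of the plane $\Pi(s)$, and the mechanism doing this work is the derivative identity $(w\times w')'=q(w\times w')$ together with the fact that a lightlike 2-plane contains only one null direction up to scale.
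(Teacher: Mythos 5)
Your argument is correct, and its first half coincides with the paper's: with $G=0$ and $X_{tt}=0$ the soliton equation collapses to an identity affine in $t$, the coefficient of $t$ gives $(w',w,\vec v)=0$, and hence $\vec v=a(s)w+b(s)w'$ on any interval where $w,w'$ are independent. Where you genuinely diverge is in extracting the contradiction. The paper first reparametrizes so that $\langle w',w'\rangle=1$ and replaces $\gamma$ by a curve with $\langle\gamma',w'\rangle=0$; then $\langle\vec v,\vec v\rangle=b^2$ forces $b$ to be a nonzero constant, and differentiating $\vec v=aw+bw'$ and pairing with $w$ gives $0=b\langle w'',w\rangle=-b$ via the identity $\langle w'',w\rangle=-1$ — a two-line contradiction, but one that leans on the normalization. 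You instead split on $b$: where $b\neq0$ you show $w''\in\mathrm{span}(w,w')$, deduce $(w\times w')'=q\,(w\times w')$, conclude that the degenerate plane $\Pi=\mathrm{span}(w,w')=(w\times w')^{\perp}$ is fixed, and use the uniqueness of the null direction in a lightlike $2$-plane to force $w$ to have constant direction; where $b\equiv0$, independence of $w,w'$ gives $\vec v=0$. This is sound and avoids both the arc-length normalization of $w'$ and the striction curve; in fact, since $w\times w'$ lies in the radical of $\Pi$ and is therefore itself proportional to $w$, your ODE $V'=qV$ already shows directly that $w$ has constant direction, so the detour through $\Pi$ could be shortened. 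Two small points: (i) your display $\mathrm{diag}(0,|w'|^2)$ tacitly uses that $w'$ is spacelike, which holds because a null vector orthogonal to the null vector $w$ would be proportional to $w$, contradicting independence (the paper states this explicitly at the outset); (ii) your closing appeal to case (3) of Theorem \ref{t1} yields ``either a plane parallel to $\vec v$ or rulings parallel to $\vec v$,'' not the unqualified second alternative — but this looseness is already present in the statement of Theorem \ref{t2} and in the paper's own proof, which likewise only establishes the cylindricality.
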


\begin{proof}
The proof is by contradiction. Suppose that the surface is not cylindrical,
in particular, $w'(s)\not=0$ in some subinterval of $I$, which we
will suppose that it is the very interval $I$. Since $w(s)$ is lightlike,
 $w'(s)$ is a spacelike vector field orthogonal to $w(s)$. Because the surface is
non-degenerate and $G=\langle w,w\rangle=0$, then $F=\langle \gamma',w\rangle \not=0$. We can take a new parameter $s$ such that $\langle w',w^{\prime
}\rangle =1$ and $\langle \gamma',w'\rangle =0$. Using $ 
G=0$ and $X_{tt}=0$, Equation \eqref{eq2} becomes 
\begin{equation*}
2(\gamma',w,w')=\langle \gamma',w\rangle \left(
(\gamma',w,\vec{v})+t(w',w,\vec{v})\right) .
\end{equation*} 
Because this is a polynomial identity on the variable $t$, we deduce 
\begin{equation*}
(w'(s),w(s),\vec{v})=0,
\end{equation*} 
for all $s\in I$. Then $\vec{v}$ is a linear combination of $w(s)$ and $w'(s)$ for all $s\in I$, hence there are  two smooth functions $a(s)$, $b(s)$   such that $ 
v=aw+bw'$. Since $\left\langle v,v\right\rangle =b^{2}$, we find that the function $b=b(s)$ is constant. Moreover, $b\not=0$ because
otherwise the vector field $w(s)$, which indicates the direction of the
rulings, would be parallel to $\vec{v}$. This proves that $\vec{v}$ is a
spacelike. Differentiating $\langle w',w'\rangle=1$ and $\langle w',w\rangle =0$, we have $\langle w'',w'\rangle =0$ and $ 
\langle w'',w\rangle +1=0$, respectively. If we now differentiate the identity $ 
v=a(s)w(s)+bw'(s)$, we have $0=a' w+aw' +bw''$. Multiplying by $w$, we conclude $0=b\langle w'',w\rangle $, a contradiction.
\end{proof}


Now consider the case that the rulings are not lightlike. If the surface $X(s,t)=\gamma (s)+tw(s)$ is not cylindrical, then we can suppose that $\langle w,w\rangle=\delta\in\{-1,1\}$. In particular,  $w'(s)$ is a vector field orthogonal to the rulings.  We separate in two cases depending if $w'(s)$ is lightlike or not.

\begin{theorem}\label{t3} Let $M$ be a ruled surface parametrized by  $X(s,t)=\gamma (s)+tw(s)$ where  $\langle w,w\rangle=\delta\in\{-1,1\}$. Assume that  $w'(s)$ is not lightlike. If $M$ is  a translating soliton with respect to $\vec{v}$, then  $M$ is a cylindrical surface or $M$ is a plane parallel to $\vec{v}$.
\end{theorem}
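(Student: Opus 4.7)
The plan is to apply the translating soliton equation \eqref{eq2} to the parametrization $X(s,t)=\gamma(s)+tw(s)$, expand both sides as polynomials in $t$, and combine the resulting coefficient identities, in the spirit of the proof of Theorem \ref{t2}. Since $M$ is non-cylindrical, $w'\neq 0$; together with the hypothesis that $w'$ is not lightlike, a reparametrization of $s$ lets us assume $\langle w',w'\rangle=\varepsilon\in\{-1,1\}$ is constant, so $\langle w,w'\rangle=0$. Replacing $\gamma$ by $\gamma+f(s)w$ for a suitable scalar function $f$ (the same ruled surface, different base curve), we also arrange $\langle\gamma',w'\rangle=0$. Under these normalizations, $E=\langle\gamma',\gamma'\rangle+\varepsilon t^2$, $F=\langle\gamma',w\rangle$, $G=\delta$, and $X_{tt}=0$, so the left-hand side $H_1$ of \eqref{eq2} is a polynomial of degree at most two in $t$, whereas the right-hand side has degree three.

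Matching the coefficient of $t^3$ yields $(w'(s),w(s),\vec{v})=0$ for all $s$, hence $\vec{v}\in\mathrm{span}\{w(s),w'(s)\}$, and we may write $\vec{v}=a(s)w(s)+b(s)w'(s)$. To exploit the constancy of $\vec{v}$, I would decompose $w''$ in the orthogonal frame $\{w,w',\xi\}$, with $\xi$ a unit vector perpendicular to $\mathrm{span}\{w,w'\}$; from $\langle w'',w\rangle=-\varepsilon$ and $\langle w'',w'\rangle=0$ one gets $w''=-\tfrac{\varepsilon}{\delta}w+C(s)\,\xi$. Differentiating $\vec{v}=aw+bw'$ and equating coefficients in $\{w,w',\xi\}$ produces
$$a'=\tfrac{\varepsilon}{\delta}b,\qquad a+b'=0,\qquad bC=0.$$
If $C\not\equiv 0$ on some subinterval, then $b\equiv 0$ there, forcing $a\equiv 0$ and hence $\vec{v}=0$, which we exclude. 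Thus $C\equiv 0$ and $w''=-\tfrac{\varepsilon}{\delta}w$, so $(w\times w')'=w\times w''=0$ and the 2-plane $\Pi=\mathrm{span}\{w(s),w'(s)\}$ is constant.

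The $t^2$ coefficient of \eqref{eq2} then reduces (using $(w',w,w'')=0$) to $(\gamma',w,\vec{v})=0$, placing $\vec{v}$ in $\mathrm{span}\{\gamma'(s),w(s)\}$ as well. Combined with $\vec{v}\in\Pi$ and $\vec{v}\neq 0$, this forces $\gamma'(s)\in\Pi$ for every $s$. Finally, the $t$ coefficient reduces to $(w',w,\gamma'')=0$, so $\gamma''\in\Pi$, and integrating gives $\gamma(s)\in\gamma(0)+\Pi$. Therefore the whole surface lies in the affine plane $\gamma(0)+\Pi$, which is parallel to $\vec{v}$ since $\vec{v}\in\Pi$.

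The main obstacle is the case analysis forcing $C\equiv 0$: a priori $w''$ could have a component transverse to $\Pi$, which geometrically corresponds to helicoid-type (minimal) ruled behaviour and would indeed produce non-planar, non-cylindrical examples if $\vec{v}=0$ were permitted. Showing that the hypothesis $\vec{v}\neq 0$ rules this out, by combining the $t^3$ identity with the constancy of $\vec{v}$ expressed in the moving frame $\{w,w',\xi\}$, is the delicate point. Once $\Pi$ is known to be fixed, the remainder is elementary linear algebra and a single integration.
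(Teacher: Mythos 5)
Your proof is correct and follows essentially the same route as the paper: normalize so that $\langle w',w'\rangle$ is a constant $\pm 1$ and $\langle\gamma',w'\rangle=0$ (striction curve), expand \eqref{eq2} as a cubic in $t$, and extract $(w',w,\vec{v})=0$ and then $(\gamma',w,\vec{v})=0$ from the two top coefficients to conclude the surface lies in a plane containing $\vec{v}$. Your moving-frame computation forcing $C\equiv 0$ (hence that $\mathrm{span}\{w,w'\}$ is a \emph{fixed} plane, using $\vec{v}\neq 0$) carefully justifies a step the paper only asserts, namely that $\{w,w',w''\}$ is everywhere linearly dependent.
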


\begin{proof} Suppose that $M$ is not cylindrical. Then $w'$ does not vanish in some point, so  $w'(s)\not=0$ in some subinterval of $I$, which   we can suppose  $I$.   After a change of parameter, we can assume $\langle w',w'\rangle=\eta\in\{-1,1\}$. As in Euclidean case, we can replace $\gamma$ by the striction curve, which we denote by  $\gamma$ again. Under all these assumptions, we have
\begin{equation*}
\langle w,w\rangle =\delta,\quad \langle w' ,w' \rangle
=\eta,\quad \langle \gamma',w' \ \rangle =0,
\end{equation*} 
where $\delta,\eta\in \left\{ -1,1\right\}$. The translating soliton equation \eqref{eq2} becomes 
\begin{equation*}
\delta (  \gamma'+tw' , w,\gamma ''+tw'') -2\langle \gamma',w\rangle (w' ,\gamma' , w) =\epsilon \left( \delta\left( \langle\gamma',\gamma'\rangle+\eta t^2\right)  -\langle \gamma',w\rangle^2\right)( \gamma
' +tw' , w,\vec{v}).
\end{equation*}
This identity can be expressed as a polynomial equation on $t$ of type 
\begin{equation*}
\sum_{n=0}^{3}A_{n}(s) t^{n}=0.
\end{equation*}
Therefore all functions $A_{n}$ must vanish, $0\leq n\leq 3$. The
expressions of $A_2$ and $A_3$ are 
\begin{eqnarray*}
A_{2} &=&\delta (w'',w' ,w)-\epsilon\delta\eta
(\gamma', w ,\vec{v}), \\
A_{3} &=&\epsilon \delta\eta (w,w' ,\vec{v}).
\end{eqnarray*}
Since the surface is non-degenerate, $A_3=0$ yields  $(w' , w,\vec{v} 
)=0 $. This implies that $w(s) $ and $w'(s)$ are contained in a
plane $\Pi $ parallel to $\vec{v}$ for each $s\in I.$ Therefore the set $ 
\left\{ w(s) ,w'(s) ,w''(s)\right\} $ is linearly dependent for all $s\in I$.
From $A_{2}=0$ we have $( \gamma',w ,\vec{v}) =0$, hence $ 
\gamma'$ is also contained in $\Pi.$ This implies that $M$ is
part of the plane $\Pi$.
 \end{proof}

Finally, we consider the case that the rulings $w(s)$ are not lightlike with $\langle w,w\rangle =\delta\in\{-1,1\}$ and $w'(s)$ is lightlike for all $s\in I$. Because $w'(s)$ is a lightlike direction in the hyperboloid $\{p\in\e_1^3:\langle p,p\rangle=1\}$, then $w(s)$ is a straight line. Replacing the $s$-parameter, we assume that $w''=0$. In particular,  there are two vectors $\vec{a},\vec{b}\in \mathbb{E}_{1}^{3}$
such that $w(s)=\vec{a}s+\vec{b}$ and $\langle \vec{a},\vec{a}\rangle
=\langle \vec{a},\vec{b}\rangle =0$ and $\langle \vec{b},\vec{b}\rangle =1$.  After  a rigid motion of $\e_1^3$, we can consider 
$$w(s)=(1,s,s)=s(0,1,1)+(1,0,0), \quad  \vec{a}=(0,1,1),\,  \vec{b}=(1,0,0).$$
 With this choice, we have $w\times w'=-w'$. In the next theorem, we give a complete classification of these ruled surfaces that are translating solitons. The goal of theorem is precisely   that we show the explicit parametrization of the base curve $\gamma$.

\begin{theorem} \label{t4} Let $M$ be a non-cylindrical ruled surface parametrized by  $X(s,t)=\gamma (s)+t(1,s,s)$.   If $M$ is  a translating soliton, then $\langle\gamma',\vec{a}\rangle\not=0$ and $\vec{a}$ is orthogonal to   $\vec{v}$.  Furthermore, after dilations and translations, the base curve $\gamma$ is given by  
$$\gamma(s)=(x(s),z(s)+\Phi(s),z(s)),$$
according to the following cases:
\begin{enumerate}
\item Case $\vec{v}=(0,1,1)$. Then $\Phi(s)=\frac{1}{2\epsilon}\log(2\epsilon s+a)$, $a\in\r$, and $x(s)$ and $z(s)$ are given by \eqref{v0x} and \eqref{v0z}, respectively. 
\item Case $\vec{v}=(1,v_2,v_2)$. Depending on the integration constants, we have 
\begin{enumerate}
\item $\Phi(s)=\frac{1}{\epsilon(s-v_2)}$ and $x(s)$ and $z(s)$ are given by \eqref{a0x} and \eqref{a0z}, respectively. 
\item $\Phi(s)=-\frac{1}{ap}\arctan (p(s-v_2))$, where $a\in\r$, $a\not=0$,  and $p=\sqrt{\epsilon/a}$ if $\mbox{sgn}(\epsilon)=\mbox{sgn}(a)$. The functions   $x(s)$ and $z(s)$ are given by \eqref{a1x} and \eqref{a1z}; or  $\Phi(s)=-\frac{1}{2pa}\log\frac{1+p(s-v_2)}{1-p(s-v_2)}$, where $a\in\r$, $a\not=0$,  and  $p=\sqrt{-\epsilon/a}$ if $\mbox{sgn}(\epsilon)=-\mbox{sgn}(a)$. The functions  $x(s)$ and $z(s)$ are given by  \eqref{a2x} or \eqref{a2z},  respectively. 

\end{enumerate}
\end{enumerate}

\end{theorem}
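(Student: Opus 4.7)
The plan is to parametrize $\gamma(s)=(x(s),y(s),z(s))$, introduce the auxiliary function $\Phi(s)=y(s)-z(s)$, and expand the translating soliton equation \eqref{eq2} as a polynomial identity in $t$. Since $w''=0$ and $X_{tt}=0$, this polynomial has degree two. Using $\langle w,w\rangle=1$, $\langle w,w'\rangle=0$, $\langle w',w'\rangle=0$, and the identity $w\times w'=-w'$, the relevant determinants simplify to $(\gamma',w,w')=-\Phi'$, $(w',w,\gamma'')=\Phi''$, and $(w',w,\vec{v})=v_2-v_3$. Collecting powers of $t$ in \eqref{eq2}, the coefficient of $t^{2}$ is then proportional to $\Phi'(v_2-v_3)$.

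I would first establish the two structural assertions of the theorem. Non-degeneracy forces $\Phi'\not\equiv 0$: if $\Phi'=0$ then $y'=z'$ gives $\langle\gamma',\gamma'\rangle=(x')^{2}=\langle\gamma',w\rangle^{2}$, whence $EG-F^{2}=2t\Phi'+\langle\gamma',\gamma'\rangle-\langle\gamma',w\rangle^{2}\equiv 0$, contradicting non-degeneracy. The vanishing of the $t^{2}$ coefficient then forces $v_2=v_3$, i.e.\ $\vec{a}\perp\vec{v}$, while $\langle\gamma',\vec{a}\rangle=\Phi'\neq 0$. The dilation invariance recalled in Section \ref{sec2} next allows me to normalize $\vec{v}$ to $(0,1,1)$ when $v_1=0$ and to $(1,v_2,v_2)$ when $v_1\neq 0$. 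Moreover, the freedom $\gamma\mapsto\gamma+h(s)w(s)$ preserves $\Phi$ and shifts $\langle\gamma',w\rangle$ by $h'$; choosing $h$ so that $\langle\gamma',w\rangle=0$ gives $x'=-s\Phi'$, which determines $x$ up to a constant once $\Phi$ is known.

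The coefficient of $t^{1}$ in \eqref{eq2}, simplified via $F=0$, yields the separable ODE $\Phi''=2\epsilon\Phi'^{2}(v_2-sv_1)$. For $\vec{v}=(0,1,1)$ this integrates immediately to a logarithmic primitive, producing the formula in item (1). For $\vec{v}=(1,v_2,v_2)$ a first quadrature gives $\Phi'=1/[\,\epsilon(s-v_2)^{2}+c\,]$, and the three sub-cases $c=0$, $\epsilon c>0$, and $\epsilon c<0$ lead respectively to the rational, $\arctan$, and $\operatorname{arctanh}$ (logarithmic) primitives appearing in item (2), with the constants $a$ and $p$ expressed in terms of $c$.

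Finally I would extract $z(s)$ from the $t^{0}$ coefficient. Substituting $x'=-s\Phi'$ and $y=z+\Phi$ collapses the determinant $(\gamma',w,\gamma'')$ to $-s\Phi'^{2}+z'\Phi''-\Phi' z''$, and rewriting the right-hand side via the $t^{1}$ identity $\epsilon\Phi'(sv_1-v_2)=-\Phi''/(2\Phi')$ makes the $z'\Phi''$ terms cancel. What remains is the elementary relation $z''+s\Phi'+(1+s^{2})\Phi''/2=0$, which integrates by parts first to $z'=-(1+s^{2})\Phi'/2+\text{const}$ and then to $z(s)=-(1+s^{2})\Phi(s)/2+\int s\Phi(s)\,ds+C_1 s+C_2$. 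Substituting the explicit $\Phi$ of each sub-case yields the announced expressions for $z(s)$, and $y=z+\Phi$ together with $x(s)=-\int s\Phi'(s)\,ds$ completes the parametrization. The principal difficulty will be organizational: carrying through the case analysis for the integration constant in the ODE for $\Phi'$, matching the resulting elementary primitives to the normalizations $a$, $p$ of the theorem, and verifying in each sub-case that the algebraic cancellation producing the compact relation for $z''$ really delivers closed-form elementary antiderivatives for both $x$ and $z$.
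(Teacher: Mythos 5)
Your plan follows the paper's proof essentially step for step: expand \eqref{eq2} as a quadratic in $t$, use the $t^{2}$ coefficient together with non-degeneracy to get $\langle\gamma',w'\rangle=\Phi'\neq 0$ and $v_2=v_3$, normalize $\langle\gamma',w\rangle=0$ so that $x'=-s\Phi'$, solve the $t^{1}$ coefficient for $\Phi$, and finally the $t^{0}$ coefficient for $z$. Your uniform closed form $z=-\tfrac12(1+s^{2})\Phi+\int s\Phi\,ds+C_1s+C_2$ is a pleasant repackaging of the case-by-case quadratures the paper carries out, and it does reproduce \eqref{v0z}.

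There is, however, a sign error in the central ODE. With $v_2=v_3$ and $x'=-s\Phi'$ one computes $(\gamma',w,\vec{v})=-\Phi'(v_2-sv_1)$, so the vanishing of the $t^{1}$ coefficient, $(w',w,\gamma'')=2\epsilon\langle\gamma',w'\rangle(\gamma',w,\vec{v})$, gives $\Phi''=-2\epsilon\Phi'^{2}(v_2-sv_1)$ --- the paper's \eqref{yz} --- not $\Phi''=+2\epsilon\Phi'^{2}(v_2-sv_1)$ as you wrote. The sign is not cosmetic: with your sign the first quadrature yields $\Phi'=1/(c-2\epsilon v_2s+\epsilon v_1 s^{2})$, which for $\vec{v}=(0,1,1)$ gives $\Phi=-\frac{1}{2\epsilon}\log(c-2\epsilon s)$ rather than the stated $\frac{1}{2\epsilon}\log(2\epsilon s+a)$, and your $\Phi'=1/[\epsilon(s-v_2)^{2}+c]$ should be $-1/[\epsilon(s-v_2)^{2}+a]$ as in \eqref{yz2}. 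Your own later step exposes the slip: the cancellation of the $z'\Phi''$ terms in the $t^{0}$ equation, which produces $z''+s\Phi'+\tfrac12(1+s^{2})\Phi''=0$, only happens upon substituting $\epsilon\Phi'(sv_1-v_2)=+\Phi''/(2\Phi')$; with the identity as you state it, a residual term $2z'\Phi''$ survives and the equation for $z$ is no longer an elementary quadrature. Once the sign is corrected, the argument coincides with the paper's and delivers exactly the announced formulas.
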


\begin{proof}  The translation soliton equation \eqref{eq2} is  
\begin{equation*}
(\gamma'+tw' ,w,\gamma '')=\epsilon \left(
\langle \gamma',\gamma'\rangle +2t\langle \gamma
' ,w' \rangle \right) (\gamma'+tw' ,w,\vec{ 
v}).
\end{equation*} 
We write this identity as the polynomial equation 
\begin{equation*}
\sum_{n=0}^{2}B_{n}(s)t^{n}=0,
\end{equation*} 
where $B_{0}(s),$ $B_{1}(s),$ $B_{2}(s) $ are   given by 
\begin{eqnarray*}
B_{0} &=&(\gamma',w,\gamma '')-\epsilon \langle
\gamma',\gamma'\rangle (\gamma',w,\vec{v}), \\
B_{1} &=&(w' ,w,\gamma '')-\epsilon \left( \langle
\gamma',\gamma'\rangle (w' ,w,\vec{v})+2\langle
\gamma',w' \rangle (\gamma',w,\vec{v})\right) ,
\\
B_{2} &=&-2\epsilon \langle \gamma',w' \rangle
(w' ,w,\vec{v}).
\end{eqnarray*} 
From $B_2=0$, we have two cases. First, assume     $\langle \gamma',w' \rangle =0$  and we will see that this case is not possible. Otherwise, because $ 
\langle \gamma',w\rangle =0$, then $\langle \gamma',\vec{ 
a}\rangle =\langle \gamma',\vec{b}\rangle =0$. This implies that $ 
\gamma'$ is lightlike. Since $X_s=\gamma'+t w'$ and $X_t=w$,  the coefficients $E$ and $F$ of the first fundamental form are $0$, hence $EG-F^2=0$ and the surface would be
degenerated, a contradiction.

As a conclusion,   $\langle \gamma',w' \rangle \neq 0$. Then  $B_2=0$ is equivalent to $ 
(w' ,w,\vec{v})=0$. The identity $w\times w' =-w' $ yields 
$\left\langle w' ,\vec{v}\right\rangle =0$, proving that $\vec{a}$ and $\vec{v}$ are orthogonal to each other. We will assume that the base curve $\gamma  $ is not lightlike,  otherwise we replace $\gamma$ by $\gamma(s)+\lambda(s) w(s)$   for a
certain smooth function $\lambda=\lambda (s) $ to get that the new base curve is not lightlike. Let us introduce the notation
$$Q=\left\langle \gamma',w' \right\rangle,\quad R=\left\langle \gamma',\gamma ^{\prime
}\right\rangle.$$
Consider the orthogonal basis $\{\gamma',w, \gamma'\times
w\}$. Because $\langle\gamma',w\rangle=0$, then $\langle\gamma'\times w,\gamma'\times w\rangle=-R$. Thus
$$w'=\frac{Q}{R}\left( \gamma'+\gamma'\times
w\right).$$
Multiplying by $\vec{v}$, we obtain  
$$
\langle \gamma',\vec{v}\rangle =-(\gamma',w,\vec{v}).
$$
With this identity and   using $w\times w' =-w' $ again, the equations $B_0=B_1=0$ write now as  
\begin{equation}
(\gamma',w,\gamma '')=-\epsilon R\langle\gamma',\vec{v}\rangle,  \label{eq9}
\end{equation} 
\begin{equation}
\langle \gamma '',w' \rangle =-2\epsilon Q\langle
\gamma',\vec{v}\rangle,  \label{eq8}
\end{equation}
respectively. Definitively, the equations to solve are \eqref{eq9} and \eqref{eq8} with the extra condition that  $\langle \gamma',w\rangle=0$. Since $\langle \vec{v},\vec{a}\rangle=0$, then $v_2=v_3$ and   $\vec{v}=(v_1,v_2,v_2)$. Let $\gamma(s)=(x(s),y(s),z(s))$. The orthogonality condition $\langle \gamma',w\rangle=0$ writes as 
\begin{equation}\label{xx}
x'+s(y'-z')=0.
\end{equation} 
Thanks to \eqref{xx}, Equation \eqref{eq8} is $y''-z''=-2\epsilon(y'-z')^2(v_2-v_1 s)$ or equivalently,
\begin{equation}\label{yz}
y'(s)-z'(s)= \dfrac{1}{2\epsilon \int^s (v_2-  v_1 u)\ du}.
\end{equation}
Now \eqref{xx} implies
\begin{equation}\label{xxx}
x'(s)=- \dfrac{s}{2\epsilon \int^s (v_2-   v_1 u)\ du}.
\end{equation}
The explicit integration depends if $v_1$ is or not $0$. From now,  we will omit those integration constants that represent translations of the curve $\gamma$.
\begin{enumerate}
\item Case $v_1=0$. In particular $v_2\not=0$. After a dilation of the space, we can assume $\vec{v}=(0,1,1)$. From \eqref{yz} we deduce
$$y(s)=z(s)+\frac{1}{2\epsilon}\log(2\epsilon s+a),\quad a\in\r.$$
Then \eqref{xxx} implies
\begin{equation}\label{v0x}
x(s)=\frac{a \log (2 \epsilon s+a)}{4}-\frac{s}{2 \epsilon}.
\end{equation}
Now Equation  \eqref{eq9} is an ODE on $z$ given by 
$$-\epsilon(2\epsilon s+a)^2 z''-a\epsilon s+1-s^2=0.$$
The integration leads to
\begin{equation}\label{v0z}
z(s)=-\frac{\left(a^2+4\right) \log (2  \epsilon s+a )}{16 \epsilon }+b s-\frac{s^2 \epsilon }{8},\quad b\in\r.
\end{equation}

\item Case $v_1\not=0$. After a dilation, let $\vec{v}=(1,v_2,v_2)$. Now \eqref{yz} is
\begin{equation}\label{yz2}
y'(s)-z'(s)=-\frac{1}{\epsilon (s-v_2)^2+a},\quad a\in\r.
\end{equation}
The integration of the function $x(s)$ in \eqref{xxx} depends if $a$ is or not $0$.
\begin{enumerate}
\item Case $a=0$. Then 
\begin{equation}\label{a0x}
x(s)=\frac{1}{\epsilon}\left(\log(s-v_2)-\frac{v_2}{s-v_2}\right)
\end{equation}
and 
$$y(s)=z(s)+\frac{1}{\epsilon (s-v_2)}.$$
Now \eqref{eq9} is 
$$1+sv_2+\epsilon (s-v_2)^3z''=0,$$
obtaining
\begin{equation}\label{a0z}
z(s)=\frac{1}{2\epsilon}\left(2v_2\log(s-v_2)-\frac{1+v_2^2}{s-v_2}\right)+bs,\quad b\in\r.
\end{equation}
\item Case $a\not=0$. The integration of \eqref{yz2} depends on the sign of $a$.
\begin{enumerate}
\item Case $\mbox{sgn}(a)=\mbox{sgn}(\epsilon)$. Let $p=\sqrt{\epsilon/a}$ and $\phi(s)=p(s-v_2)$. Then 
$$y(s)=z(s)-\frac{1}{ ap}\arctan{\phi(s)}.$$
The integration of \eqref{xxx} yields
\begin{equation}\label{a1x}
x(s)=\frac{1}{2\epsilon}\log(1+\phi(s)^2)+\frac{ v_2}{a p} \arctan{\phi(s)}.
\end{equation}
Then \eqref{eq9} is
$$-\epsilon(1+v_2 s)(s-v_2)+as-((s-v_2)^2+\epsilon a)^2z''=0.$$
The solution of this equation is
\begin{equation}\label{a1z}
z(s)=\frac{\left(p^2 \left(v_2^2+1\right)-1\right) \arctan{\phi(s)}+p v_2 \log \left(1+\phi(s)^2\right)}{2 \epsilon }+b s, \quad b\in\r.
\end{equation}

\item Case $\mbox{sgn}(a)=-\mbox{sgn}(\epsilon)$. Following the same arguments as in the foregoing case,    if $p=\sqrt{-\epsilon/a}$ and  $\phi(s)=p(s-v_2)$, we obtain 
$$y(s)=z(s)-\frac{1}{2pa}\log\frac{1+\phi(s)}{1-\phi(s)}.$$
\begin{equation}\label{a2x}
x(s)=\frac{1}{2\epsilon}\log(1-\phi(s)^2)+\frac{v_2}{2ap}\log\frac{1+\phi(s)}{1-\phi(s)}.
\end{equation}
\begin{equation}\label{a2z}
z(s)=\frac{\epsilon v_2}{2}   \log \left(1-\phi(s)^2\right)+\frac{1-a \epsilon +v_2^2}{2ap}  \mbox{arctanh}{\phi(s)}+b s,\quad b\in\r.
\end{equation}

\end{enumerate}
\end{enumerate} 
\end{enumerate}
\end{proof}

We conclude this section with the next consequence of Theorems \ref{t2}, \ref{t3} and \ref{t4}.  

\begin{corollary} Let $M$ be a (non-planar) translating soliton in $\e_1^3$ with respect to $\vec{v}$. Assume that $M$ is a non-cylindrical ruled surface parametrized by $X(s,t)=\gamma(s)+t w(s)$. Then, and after a reparametrization of the surface, $w=w(s)$ is a lightlike straight line and $\langle w',\vec{v}\rangle=0$. 
\end{corollary}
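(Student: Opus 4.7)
The plan is to organize a trichotomy on the causal character of the ruling vector field $w(s)$ and of its derivative $w'(s)$, and to read off the conclusion by invoking Theorems \ref{t2}, \ref{t3} and \ref{t4} in turn; no new computation should be needed.

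First, I would examine whether the rulings are lightlike. If $\langle w(s),w(s)\rangle=0$ identically, Theorem \ref{t2} immediately forces $M$ to be cylindrical, contradicting the standing hypothesis of the corollary. Hence, after reparametrizing $s$, I may assume $\langle w,w\rangle=\delta\in\{-1,1\}$, and the non-cylindrical assumption lets me assume $w'(s)\neq 0$ on the interval of interest. Differentiating $\langle w,w\rangle=\delta$ then gives $\langle w',w\rangle=0$, so $w'(s)\in w(s)^{\perp}$.

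Second, I would dispose of the subcase in which $w'$ is not lightlike. Theorem \ref{t3} applies directly and yields that $M$ is either cylindrical or a plane parallel to $\vec{v}$, both of which are excluded by hypothesis. So $w'$ must be lightlike. Since the orthogonal complement of $w$ contains a nonzero lightlike vector only when $w$ itself is spacelike, this already forces $\delta=1$. The discussion preceding Theorem \ref{t4} then applies: a lightlike tangent direction to the hyperboloid $\{p\in\e_1^3:\langle p,p\rangle=1\}$ forces $w''=0$ after a further change of parameter, so $w(s)=\vec{a}s+\vec{b}$ with $\vec{a}=w'$ lightlike. In other words, $w$ is a lightlike straight line in $\e_1^3$.

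Finally, I would invoke Theorem \ref{t4}, whose very first conclusion is that $\vec{a}$ is orthogonal to $\vec{v}$; translating back to the notation of the corollary, this reads $\langle w',\vec{v}\rangle=0$, as required. The only step that is slightly more than bookkeeping is the causal observation that $w^{\perp}$ admits a lightlike direction if and only if $w$ is spacelike, but this is immediate since the orthogonal complement of a timelike vector in $\e_1^3$ is a positive-definite plane. Consequently I do not anticipate any genuine obstacle in this proof beyond carefully threading Theorems \ref{t2}, \ref{t3} and \ref{t4} together.
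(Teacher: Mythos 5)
Your proposal is correct and is exactly the argument the paper intends: the corollary is stated as a direct consequence of Theorems \ref{t2}, \ref{t3} and \ref{t4}, and your trichotomy (lightlike rulings excluded by Theorem \ref{t2}, non-lightlike $w'$ excluded by Theorem \ref{t3}, the remaining case handled by the discussion preceding Theorem \ref{t4} and its conclusion $\langle\vec{a},\vec{v}\rangle=0$) is precisely the intended threading. Your added observation that $w^{\perp}$ contains a lightlike direction only when $w$ is spacelike is a correct and welcome detail that the paper leaves implicit.
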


\section*{Acknowledgment} Rafael L\'opez has been partially supported by the grant no. MTM2017-89677-P, MINECO/AEI/FEDER,
215 UE.

\end{document}